\newcommand{\N}{\mathbb{N}}
\newcommand{\Z}{\mathbb{Z}}
\newcommand{\ovl}{\overline}
\newcommand{\cC}{\mathcal{C}}
\newcommand{\cN}{\mathcal{N}}
\newcommand{\cO}{\mathcal{O}}
\newcommand{\lo}{\leqslant_o}
\newcommand{\Gc}{G_{\widehat{\mathcal{C}}}}
\newtheorem{thm}{Theorem}[section]
\newtheorem{prop}[thm]{Proposition}
\newtheorem{cor}[thm]{Corollary}
\theoremstyle{definition}
\newtheorem{defn}[thm]{Definition}
\theoremstyle{remark}
\newtheorem{ex}[thm]{Example}
\newtheorem{rem}[thm]{Remark}
\begin{document}
\title{On double coset separability and the Wilson-Zalesskii property}

\author{Ashot Minasyan}
\address{CGTA, School of Mathematical Sciences,
University of Southampton, Highfield, Southampton, SO17~1BJ, United Kingdom.}
\email{aminasyan@gmail.com}

\begin{abstract}
A residually finite group $G$ has the Wilson-Zalesskii property if for all finitely generated subgroups $H,K \leqslant G$, one has  $\ovl{H} \cap \ovl{K}=\ovl{H \cap K}$, where the closures are taken in the profinite completion $\widehat G$ of $G$. This property played an important role in several papers, and is usually combined with separability of double cosets. In the present note we show that the Wilson-Zalesskii property is actually enjoyed by every double coset separable group. We also construct an example of a LERF group that is not double coset separable and does not have the Wilson-Zalesskii property.
\end{abstract}

\keywords{Double coset separability, Wilson-Zalesskii property, profinite completion}
\subjclass[2020]{20E26, 20E18}

\maketitle

\section{Introduction}
Every residually finite group $G$ has a natural embedding into its profinite completion $\widehat G$, which is a compact topological group. The topology on $\widehat G$ induces the \emph{profinite topology} on $G$. A subset $S \subseteq G$ is said to be \emph{separable} if it is closed in this topology, i.e., $S= \ovl{S} \cap G$, where $\ovl{S}$ denotes the closure of $S$ in $\widehat G$.

Many residual properties of $G$ can be interpreted in terms of the profinite topology or the embedding of $G$ into $\widehat G$. In establishing various such properties it is often useful to have control over the intersections of images of two subgroups $H,K \in \mathcal{S}$ in finite quotients of $G$, where $\mathcal{S}$ is a class of subgroups of $G$ (e.g., $\mathcal{S}$ could consist of all cyclic subgroups, all abelian subgroups or all finitely generated subgroups).
The best one can hope for is that for all $H,K \in \mathcal{S}$ we have
\begin{equation}\label{eq:WZ_cond}
\ovl{H} \cap \ovl{K}=\ovl{H \cap K} \text{ in } \widehat{G}
\end{equation}
(see Remark~\ref{rem:control_of_intersec} and Proposition~\ref{prop:tract_intersec_equiv} below, which explain how this is related to controlling the intersection of the images of $H$ and $K$ in finite quotients of $G$).

Condition \eqref{eq:WZ_cond} played an important role in the papers of Ribes and Zalesskii \cite{R-Z-amalg}, Ribes, Segal and Zalesskii \cite{Rib-Seg-Zal}, Wilson and Zalesskii  \cite{Wil-Zal} and Antol\'{\i}n and Jaikin-Zapirain \cite{A-J}, to mention a few. In all of these papers, this condition was established along with (and after) the double coset separability condition, stating that for all $H,K \in \mathcal{S}$
\begin{equation}\label{eq:dc_sep}
HK \text{ is separable in } G.
\end{equation}

The purpose of this note is to demonstrate that condition \eqref{eq:dc_sep} implies \eqref{eq:WZ_cond}, provided $\mathcal S$ is closed under taking finite index subgroups. More precisely, we prove the following.
\begin{thm} \label{thm:main-spec_case} Let $H,K$ be subgroups of a residually finite group $G$. Then the following are equivalent:
\begin{itemize}
  \item[(a)] the double coset $HK$ is separable in $G$ and $\ovl{H} \cap \ovl{K}=\ovl{H \cap K}$ in  $\widehat{G}$;
    \item[(b)] for every finite index subgroup $L \leqslant_f G$, with $H \cap K \subseteq L$, the double coset $(H \cap L)K$ is separable in $G$.
\end{itemize}
\end{thm}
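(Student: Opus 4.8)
The plan is to prove the two implications separately; both rely on three standard facts about the profinite topology, which I would set down first. (i) For all subsets $A,B\subseteq\widehat G$ one has $\ovl{AB}=\ovl A\,\ovl B$ (by continuity of multiplication and compactness of $\ovl A\times\ovl B$). (ii) If $L\leqslant_f G$ then $\ovl L$ is open in $\widehat G$ with $\ovl L\cap G=L$ (finite index subgroups are separable), and hence $\ovl{H\cap L}=\ovl H\cap\ovl L$, since an element of $\ovl H$ lying in the open set $\ovl L$ is a limit of elements of $H$ that eventually fall into $\ovl L\cap G=L$. (iii) $\ovl{H\cap K}=\bigcap\{\,\ovl L \mid L\leqslant_f G,\ H\cap K\subseteq L\,\}$: here ``$\subseteq$'' is immediate, while for ``$\supseteq$'' one observes that if $x\notin\ovl{H\cap K}$ then, $\ovl{H\cap K}$ being closed, there is an open normal subgroup $N\trianglelefteq\widehat G$ with $x\notin\ovl{H\cap K}\,N$, and the subgroup $L:=(H\cap K)(N\cap G)$ is then of finite index in $G$, contains $H\cap K$, and satisfies $\ovl L=\ovl{H\cap K}\,\ovl{N\cap G}=\ovl{H\cap K}\,N\not\ni x$, using (i) and $\ovl{N\cap G}=N$.

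For the implication (b)$\Rightarrow$(a): taking $L=G$ in (b) shows immediately that $HK=(H\cap G)K$ is separable. For the equality $\ovl H\cap\ovl K=\ovl{H\cap K}$, since ``$\supseteq$'' always holds, it suffices by (iii) to fix an arbitrary $L\leqslant_f G$ with $H\cap K\subseteq L$ and prove $\ovl H\cap\ovl K\subseteq\ovl L$. Given $x\in\ovl H\cap\ovl K$, I would use that $x\ovl L$ is an open neighbourhood of $x$ meeting $H$ to pick $a\in H\cap x\ovl L$ and put $y:=a^{-1}x$; then $x=ay$ with $a\in H$ and $y\in\ovl H\cap\ovl L=\ovl{H\cap L}$, and symmetrically $x=bz$ with $b\in K$ and $z\in\ovl{K\cap L}$. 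Now $a^{-1}b=yz^{-1}\in\ovl{H\cap L}\,\ovl K=\ovl{(H\cap L)K}$ while $a^{-1}b\in G$, so separability of $(H\cap L)K$ (the hypothesis of (b)) gives $a^{-1}b=h'k'$ with $h'\in H\cap L$ and $k'\in K$. Then $ah'=b(k')^{-1}$ lies in $H\cap K$ (being in $H$ and in $K$) and hence in $L$, so $a=(ah')(h')^{-1}\in L$; thus $a\in H\cap L$ and therefore $x=ay\in\ovl L$, as required.

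For the implication (a)$\Rightarrow$(b): fix $L\leqslant_f G$ with $H\cap K\subseteq L$, and let $g\in\ovl{(H\cap L)K}\cap G=\ovl{H\cap L}\,\ovl K\cap G$, say $g=ab$ with $a\in\ovl{H\cap L}=\ovl H\cap\ovl L$ and $b\in\ovl K$. Then $g\in\ovl H\,\ovl K=\ovl{HK}$, so separability of $HK$ gives $g=hk$ with $h\in H$, $k\in K$. Consequently $h^{-1}a=kb^{-1}\in\ovl H\cap\ovl K=\ovl{H\cap K}\subseteq\ovl L$, and together with $a\in\ovl L$ this yields $h=a(h^{-1}a)^{-1}\in\ovl L$; since $h\in G$ it follows that $h\in\ovl L\cap G=L$, so $h\in H\cap L$ and $g=hk\in(H\cap L)K$. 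This proves $\ovl{(H\cap L)K}\cap G\subseteq(H\cap L)K$, the reverse inclusion being trivial, so $(H\cap L)K$ is separable.

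I do not anticipate a serious obstacle: once (i)--(iii) are available, each implication is a short manipulation. The one place that needs genuine care is (iii)---recovering $\ovl{H\cap K}$ from the closures of the finite index subgroups containing $H\cap K$---together with noticing that it is precisely the hypothesis ``$H\cap K\subseteq L$'' in part (b) that makes the double coset assumption strong enough: without it, the element $ah'\in H\cap K$ produced in the (b)$\Rightarrow$(a) argument need not lie in $L$, and the inclusion $\ovl H\cap\ovl K\subseteq\ovl L$ would fail to follow.
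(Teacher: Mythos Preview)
Your proof is correct, but it takes a genuinely different route from the paper's. The paper does not prove Theorem~\ref{thm:main-spec_case} directly; it introduces an intermediate notion---that the intersection $H\cap K$ is \emph{pro-$\cC$ tractable}, meaning that for every $M\lhd_o G$ there is $N\lhd_o G$ with $N\subseteq M$ and $HN\cap KN\subseteq (H\cap K)M$---and then proves two separate equivalences: Proposition~\ref{prop:tract_intersec_equiv} shows tractability is equivalent to $\ovl H\cap\ovl K=\ovl{H\cap K}$ (this is where compactness of the completion is used), while Proposition~\ref{prop:main} shows that tractability together with separability of $HK$ is equivalent to condition~(b). The latter proposition is proved entirely inside $G$ by a finite coset-representative argument, never passing to $\widehat G$. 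By contrast, you work throughout in $\widehat G$ and let the single identity $\ovl{AB}=\ovl A\,\ovl B$ (valid because $\widehat G$ is compact) do the work that the paper splits between compactness and the coset argument. Your argument is shorter and more self-contained for the profinite case; the paper's decomposition, on the other hand, isolates the tractability concept as an independently usable statement about finite quotients and is phrased so as to apply verbatim to an arbitrary formation~$\cC$.
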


The above theorem follows from Proposition~\ref{prop:tract_intersec_equiv} below, which restates condition \eqref{eq:WZ_cond} in terms of finite index subgroups of the group $G$, and Proposition~\ref{prop:main}, which characterises this restatement in terms of double cosets. Both of these propositions are stated in the general situation of a pro-$\cC$ topology, where $\cC$ is a formation of finite groups. In particular, analogues of Theorem~\ref{thm:main-spec_case} are also true for the pro-$p$ topology, the pro-soluble topology, etc.

Following \cite{A-J}, we say that a group $G$ has the \emph{Wilson-Zalesskii property} if \eqref{eq:WZ_cond} holds for arbitrary finitely generated subgroups $H, K \leqslant G$. This property is named after Wilson and Zalesskii, who established it in the case of finitely generated virtually free groups in \cite{Wil-Zal}. We will call a group $G$ \emph{double coset separable} if \eqref{eq:dc_sep} holds for all finitely generated subgroups $H,K \leqslant G$.

\begin{cor}\label{cor:dc->WZ} Every double coset separable group satisfies the Wilson-Zalesskii property.
\end{cor}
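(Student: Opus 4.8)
The plan is to deduce Corollary~\ref{cor:dc->WZ} directly from Theorem~\ref{thm:main-spec_case}, with no extra work beyond unwinding definitions. Let $G$ be a double coset separable group and let $H, K \leqslant G$ be arbitrary finitely generated subgroups; I want to establish \eqref{eq:WZ_cond} for this pair. By the equivalence in Theorem~\ref{thm:main-spec_case}, it suffices to check condition~(b) there, namely that the double coset $(H \cap L)K$ is separable in $G$ for every finite index subgroup $L \leqslant_f G$ with $H \cap K \subseteq L$.

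First I would record that $G$ is residually finite, so that $\widehat G$ and Theorem~\ref{thm:main-spec_case} are available: taking $H = K = \{1\}$ in the definition of double coset separability shows that $\{1\} = \{1\}\{1\}$ is separable in $G$, which is precisely residual finiteness. Next, fix $L \leqslant_f G$ with $H \cap K \subseteq L$. Since $L$ has finite index in $G$, the subgroup $H \cap L$ has finite index in $H$; as $H$ is finitely generated, so is $H \cap L$, because a finite index subgroup of a finitely generated group is finitely generated. Thus $H \cap L$ and $K$ are both finitely generated subgroups of the double coset separable group $G$, whence $(H \cap L)K$ is separable in $G$. This verifies condition~(b) of Theorem~\ref{thm:main-spec_case}, so condition~(a) holds; in particular $\ovl{H} \cap \ovl{K} = \ovl{H \cap K}$ in $\widehat G$. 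Since $H$ and $K$ were arbitrary finitely generated subgroups, $G$ enjoys the Wilson-Zalesskii property.

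There is essentially no serious obstacle here once Theorem~\ref{thm:main-spec_case} is in hand; the only points needing (routine) care are the observation that double coset separability forces residual finiteness — needed for the profinite completion to behave well and for Theorem~\ref{thm:main-spec_case} to apply — and the passage from $H$ finitely generated to $H \cap L$ finitely generated via the finite index hypothesis on $L$. For completeness one may also note that the case $L = G$ of condition~(b) is just the separability of $HK$ itself, so the full strength of statement~(a) of Theorem~\ref{thm:main-spec_case} is recovered, not merely the intersection identity.
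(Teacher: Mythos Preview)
Your argument is correct and is exactly the (implicit) deduction the paper intends: the paper states the corollary without proof, having already observed that the class of finitely generated subgroups is closed under passing to finite index subgroups, so that condition~(b) of Theorem~\ref{thm:main-spec_case} is automatic under double coset separability. Your explicit check that double coset separability forces residual finiteness (via $H=K=\{1\}$) is a small but worthwhile addition, since the paper's statement of Theorem~\ref{thm:main-spec_case} and of the Wilson--Zalesskii property both presuppose it.
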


Note that for (virtually) free groups the double coset separability was first proved by Gitik and Rips \cite{Gitik-Rips}. This was extended by Niblo \cite{Niblo} to finitely generated Fuchsian groups and fundamental groups of Seifert-fibred $3$-manifolds. In \cite{M-M} the author and Mineh showed that all finitely generated Kleinian groups and limit groups are double coset separable.
Hence, by Corollary~\ref{cor:dc->WZ}, such groups have the Wilson-Zalesskii property. For limit groups this answers a question of Antol\'{\i}n and Jaikin-Zapirain from \cite[Subsection~2.2]{A-J}.

More generally, separability of double cosets of ``convex'' subgroups is known in many non-positively curved groups (see \cite{Min-GFERF,Groves-Manning,Shepherd-imitator,M-M}). By combining these results with Theorem~\ref{thm:main-spec_case} we gain control over the intersection of such subgroups in finite quotients. Our last corollary describes one such application.

\begin{cor} \label{cor:QCERF_rh_gps} Let $G$ be a finitely generated group hyperbolic relative to a family of double coset separable subgroups. If every finitely generated relatively quasiconvex subgroup is separable in $G$ then  any two finitely generated relatively quasiconvex subgroups $H,K \leqslant G$ satisfy \eqref{eq:WZ_cond}.
\end{cor}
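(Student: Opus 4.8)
The plan is to reduce the statement to Theorem~\ref{thm:main-spec_case}, using the double coset separability results for relatively hyperbolic groups quoted just before the corollary. First I would record the standard auxiliary fact that if $A$ has finite index in a finitely generated relatively quasiconvex subgroup $B \leqslant G$, then $A$ is again finitely generated (Reidemeister--Schreier) and relatively quasiconvex (relative quasiconvexity passes to finite index subgroups). In particular, for any finite index subgroup $L \leqslant_f G$ the subgroup $H \cap L$ has finite index in $H$, hence is finitely generated and relatively quasiconvex.

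Next I would invoke, as a stand-alone input with a precise reference, the separability of products of relatively quasiconvex subgroups: under the hypotheses of the corollary, namely $G$ finitely generated and hyperbolic relative to double coset separable peripheral subgroups, with every finitely generated relatively quasiconvex subgroup separable in $G$, the product $AB$ is separable in $G$ for all finitely generated relatively quasiconvex subgroups $A,B \leqslant G$. This is exactly the content of the results cited before the statement (see \cite{M-M}, and also \cite{Min-GFERF,Groves-Manning,Shepherd-imitator}).

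Combining these two observations gives the corollary quickly. Let $H,K \leqslant G$ be finitely generated relatively quasiconvex, and let $L \leqslant_f G$ be any finite index subgroup with $H \cap K \subseteq L$. By the first observation, $H \cap L$ is finitely generated and relatively quasiconvex, so by the second observation the double coset $(H \cap L)K$ is separable in $G$. Thus condition (b) of Theorem~\ref{thm:main-spec_case} is satisfied for the pair $H,K$, whence condition (a) holds; in particular $\ovl{H} \cap \ovl{K} = \ovl{H \cap K}$ in $\widehat G$, which is \eqref{eq:WZ_cond}.

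The main point requiring care is not this deduction, which is essentially formal once Theorem~\ref{thm:main-spec_case} and the first observation are in hand, but in citing the double coset separability input in exactly the right generality: one needs separability of $AB$ for two \emph{arbitrary} finitely generated relatively quasiconvex subgroups $A,B$ (not merely a quasiconvex subgroup paired with a peripheral one), and one must check that the hypothesis ``each peripheral subgroup is double coset separable'' is precisely what the quoted theorems assume. I would therefore isolate this as a lemma with an explicit reference before running the three-line argument above.
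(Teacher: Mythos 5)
Your proposal is correct and follows essentially the same route as the paper: it cites the separability of products of finitely generated relatively quasiconvex subgroups from \cite{M-M}, notes that finite index subgroups of relatively quasiconvex subgroups remain finitely generated and relatively quasiconvex, and then verifies condition (b) of Theorem~\ref{thm:main-spec_case}. The only difference is that you spell out the verification of condition (b) explicitly, which the paper leaves implicit.
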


\begin{proof} Let $G$ be a group from the statement. By \cite[Corollary~1.4]{M-M}, the product of two finitely generated relatively quasiconvex subgroups is separable in $G$. Since a finite index subgroup of a relatively quasiconvex subgroup is again relatively quasiconvex \cite[Lemma~5.22]{M-M}, the claim of the corollary follows from Theorem~\ref{thm:main-spec_case}.
\end{proof}

We finish this note by constructing, in Section~\ref{sec:example}, an example of a finitely presented LERF group which is not double coset separable and does not have the Wilson-Zalesskii property.

\subsection*{Acknowledgements} I am grateful to Pavel Zalesskii for fruitful discussions and for drawing my attention to the paper \cite{A-J}, which motivated this note.

\section{A restatement of condition \eqref{eq:WZ_cond}}
Let us fix a formation $\cC$ of finite groups; in other words, $\cC$ is a non-empty class of finite groups which is closed under taking quotients and subdirect products (see \cite[Section~2.1]{Rib-Zal}).

\subsection{Pro-$\cC$ topology and completion}
In this subsection we summarise basic definitions and properties of pro-$\cC$ topology and pro-$\cC$ completions. We refer the reader to  \cite[Sections~3.1, 3.2]{Rib-Zal} for a detailed exposition.

Given a group $G$, we define the \emph{pro-$\cC$ topology} on $G$ by taking the family of normal subgroups $\cN_\cC(G)=\{N \lhd G \mid G/N \in \cC\}$ as a basis of open neighborhoods of the identity element.
A subset $A \subseteq G$ will be called \emph{$\cC$-open} if it is open in the pro-$\cC$ topology on $G$. \emph{$\cC$-closed} and \emph{$\cC$-clopen} subsets are defined similarly.
We will write $H \lo G$ and $N \lhd_o G$ to indicate that $H$ is an open subgroup of $G$ and $N$ is an open normal subgroup of $G$ in the pro-$\cC$ topology.  Note that a subgroup $H \leqslant G$ is $\cC$-open if and only if it contains a $\cC$-open normal subgroup; and $N \lhd G$ is $\cC$-open if and only if $G/N \in \cC$. If $H \lo G$ and $X \subseteq G$ then $XH$ and $G \setminus XH$ are both open as unions of cosets modulo $H$, thus $XH$ is a $\cC$-clopen subset of $G$.

We will use $\Gc$ to denote the \emph{pro-$\cC$ completion} of a group $G$. Equipped with its pro-$\cC$ topology, $\Gc$  is a profinite group; in particular, it is compact.
The natural homomorphism $G \to \Gc$ has dense image. This homomorphism is injective if and only if $G$ is \emph{residually-$\cC$}, i.e., $\bigcap_{N \in \cN_\cC(G)}N=\{1\}$.

\subsection{Tractable intersections}
\begin{defn}
Let $G$ be a group and let $H,K \leqslant G$ be two subgroups. We will say that the intersection $H \cap K$ is \emph{pro-$\cC$ tractable} in $G$ if
for every $M \lhd_o G$ there exists $N \lhd_o G$ such that $N \subseteq M$ and
\begin{equation} \label{eq:tract}
       HN \cap KN \subseteq (H \cap K) M ~\text{ in } G.
\end{equation}
\end{defn}

\begin{rem}\label{rem:control_of_intersec}
Note that condition \eqref{eq:tract} can be restated as $\phi(H) \cap \phi(K) \subseteq \phi(H \cap K) \phi(M)$ in the finite quotient $G/N \in \cC$, where $\phi:G \to G/N$ denotes the natural homomorphism.
\end{rem}

\begin{rem} The following observation will be used throughout this note without further justification. If $A,B$ are subsets of a group $G$ and $H' \leqslant H \leqslant G$ are subgroups then
\[AH' \cap BH=(A \cap BH)H' ~\text{ and }~ H'A \cap HB=H'(A \cap HB).\]
\end{rem}

\begin{prop}\label{prop:tract_intersec_equiv} For subgroups $H, K$ of a residually-$\cC$ group $G$ the following are equivalent:
\begin{itemize}
    \item[(i)] the intersection $H \cap K$ is pro-$\cC$ tractable in $G$;
    \item[(ii)] $\ovl{H} \cap \ovl{K}= \ovl{H \cap K}$ in $\Gc$, where $\ovl{H}$ denotes the closure of $H$ in the pro-$\cC$ completion $\Gc$.
\end{itemize}
\end{prop}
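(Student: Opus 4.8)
The plan is to move everything into the profinite group $\Gc$ and use two standard facts about pro-$\cC$ completions (see \cite[Sections~3.1, 3.2]{Rib-Zal}). First, for a subgroup $A\leqslant G$ and $N\lhd_o G$ one has $\overline{AN}=A\overline{N}=\pi_N^{-1}(AN/N)$, where $\pi_N\colon\Gc\to G/N$ is the canonical epimorphism with $\ker\pi_N=\overline{N}$; hence $\ovl{A}=\bigcap_{N\lhd_o G}\overline{AN}$, and (using residual-$\cC$-ness of $G$) $\ovl{A}\cap G=A$ whenever $A$ is $\cC$-closed. Second, the subgroups $\overline{N}$, $N\lhd_o G$, are exactly the open normal subgroups of $\Gc$. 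From the first fact I get the identity I will lean on throughout: for $N\lhd_o G$,
\[
\overline{HN}\cap\overline{KN}=\pi_N^{-1}\bigl((HN/N)\cap(KN/N)\bigr)=\pi_N^{-1}\bigl((HN\cap KN)/N\bigr)=\overline{HN\cap KN},
\]
since $N\subseteq HN\cap KN$. I will also freely use that sets of the form $XH$ with $H\lo G$ are $\cC$-clopen in $G$ (noted in the text) and that their closures in $\Gc$ meet $G$ exactly in $XH$.

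For $(i)\Rightarrow(ii)$: the inclusion $\ovl{H\cap K}\subseteq\ovl{H}\cap\ovl{K}$ is automatic by monotonicity of the closure, so I would fix $g\in\ovl{H}\cap\ovl{K}$ and an arbitrary $M\lhd_o G$ and show $g\in(H\cap K)\overline{M}$; since $\ovl{H\cap K}=\bigcap_{M\lhd_o G}(H\cap K)\overline{M}$, this gives $g\in\ovl{H\cap K}$. Apply pro-$\cC$ tractability to $M$ to obtain $N\lhd_o G$ with $N\subseteq M$ and $HN\cap KN\subseteq(H\cap K)M$. Then
\[
g\in\ovl{H}\cap\ovl{K}\subseteq\overline{HN}\cap\overline{KN}=\overline{HN\cap KN}\subseteq\overline{(H\cap K)M}=(H\cap K)\overline{M},
\]
which is what I want.

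For $(ii)\Rightarrow(i)$: fix $M\lhd_o G$. From (ii) together with $\ovl{H\cap K}\subseteq(H\cap K)\overline{M}$ I obtain $\bigcap_{N\lhd_o G}\bigl(\overline{HN}\cap\overline{KN}\bigr)=\ovl{H}\cap\ovl{K}\subseteq(H\cap K)\overline{M}$. Now the right-hand side is open in $\Gc$ (a union of cosets of the open subgroup $\overline{M}$), while the sets $\overline{HN}\cap\overline{KN}$ are closed, hence compact, and the family $\{\,\overline{HN}\cap\overline{KN}\mid N\lhd_o G\,\}$ is directed downwards (pass from $N_1,N_2$ to $N_1\cap N_2$). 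Compactness of $\Gc$ then forces a single member of the family to lie inside $(H\cap K)\overline{M}$; replacing $N$ by $N\cap M$ I may assume $N\subseteq M$, and then $\overline{HN\cap KN}=\overline{HN}\cap\overline{KN}\subseteq(H\cap K)\overline{M}$. Intersecting with $G$ and using that $HN\cap KN$ and $(H\cap K)M$ are $\cC$-clopen yields $HN\cap KN\subseteq(H\cap K)M$, i.e.\ \eqref{eq:tract}.

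The only genuinely non-formal step is the compactness argument in $(ii)\Rightarrow(i)$: it converts the ``limiting'' equality $\ovl{H}\cap\ovl{K}=\ovl{H\cap K}$ into a statement that already holds at a single finite quotient $G/N$, and this is exactly where compactness of $\Gc$ (equivalently, a finite-intersection-property argument) is essential. Everything else is bookkeeping with the standard dictionary between $\cC$-clopen subsets of $G$ and their closures in $\Gc$, which I expect to be routine.
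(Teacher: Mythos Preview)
Your proof is correct and follows essentially the same approach as the paper: both directions rely on the identity $\overline{HN}\cap\overline{KN}=\overline{HN\cap KN}$ for $N\lhd_o G$ and on $\overline{H\cap K}=\bigcap_{M\lhd_o G}(H\cap K)\overline{M}$, and the non-trivial direction (ii)$\Rightarrow$(i) is handled via compactness of $\Gc$. The only cosmetic difference is that the paper phrases the compactness step as a finite-intersection-property argument by contradiction, whereas you use the equivalent direct form (a downward-directed family of compact sets contained in an open set has a member contained in it).
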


\begin{proof}
Since $G$ is residually-$\cC$, we will treat it as a subgroup of $\Gc$. Note that for an arbitrary $L \lhd_o G$ its closure $\ovl{L}$ is a clopen subgroup of $\Gc$ and $\ovl{L} \cap G=L$, so that $\Gc/\ovl{L}=G/L$ (see \cite[Proposition~3.2.2]{Rib-Zal}). Given any $M \lhd_o G$, let $\cN_\cC(M,G)=\{N \lhd_o G \mid N \subseteq M\}$ and observe that $\{\ovl{N} \mid N \in \cN_\cC(M,G)\}$ is a basis of open neighborhoods of the identity element in $\Gc$.

Let us start with showing that (i) implies (ii). Assuming (i), we know that for every $M \lhd_o G$ there exists $N \in \cN_\cC(M,G)$ such that \eqref{eq:tract} holds. After taking closures of both sides we obtain
\begin{equation} \label{eq:first_inclusion}\ovl{HN \cap KN} \subseteq \ovl{(H \cap K)M} ~\text{ in } \Gc.
\end{equation}
Note that $HN,KN \lo G$, so, by \cite[Proposition~3.2.2]{Rib-Zal}, $\ovl{HN \cap KN}=\ovl{HN} \cap \ovl{KN}$. Clearly $\ovl{H} \cap \ovl {K} \subseteq \ovl{HN} \cap \ovl{KN}$ and
$\ovl{(H \cap K)M}=(H \cap K)\ovl{M}$, because $\ovl{M}$ is a clopen subgroup of $\Gc$.
Hence, in view of \eqref{eq:first_inclusion}, we obtain
\begin{equation} \label{eq:second_inclusion}
 \ovl{H} \cap \ovl{K} \subseteq (H \cap K) \ovl{M} ~\text{ in } \Gc, \text{ for every } M\lhd_o G.
\end{equation}
It is easy to see that $\ovl{H \cap K}=\bigcap_{M \lhd_o G} (H \cap K)\ovl{M}$, because $\cN_{\cC}(\Gc)=\{\ovl{L} \mid L \in \cN_{\cC}(G) \}$. Therefore \eqref{eq:second_inclusion} implies that  $\ovl{H} \cap \ovl{K} \subseteq \ovl{H \cap K}$. The opposite inclusion is obvious, so (ii) has been established.

We will now prove that (ii) implies (i) (in the case of profinite topology this was done in \cite[Corollary~10.4]{A-J}). Suppose that (ii) holds and $M \lhd_o G$ is arbitrary.
If (i) is false, then for every $N \in \cN_\cC(M,G)$, we have
\[(HN \cap KN) \setminus (H \cap K) M\neq \emptyset \text{ in } G, \]
hence
\begin{equation} \label{eq:ovls}
  (H \ovl{N} \cap K \ovl{N}) \setminus (H \cap K) \ovl{M}\neq \emptyset \text{ in } \Gc, \text{ for all } N \in \cN_\cC(M,G),
\end{equation}
where we used the fact that $(H \cap K) \ovl{M} \cap G=(H \cap K) (\ovl{M} \cap G)= (H \cap K)M$.

The family $ \{(H \ovl{N} \cap K \ovl{N}) \setminus (H \cap K) \ovl{M} \mid N \in \cN_\cC(M,G)\}$ consists of clopen sets in $\Gc$ and has the finite intersection property by \eqref{eq:ovls} (because  the intersection of finitely subgroups from $\cN_\cC(M,G)$ is again in $\cN_\cC(M,G)$). Compactness of $\Gc$ now implies that
\begin{equation}\label{eq:big_intersec}
\bigcap_{N \in \cN_\cC(M,G)} (H \ovl{N} \cap K \ovl{N}) \setminus (H \cap K) \ovl{M} \neq \emptyset.
\end{equation}
Since $\bigcap_{N \in \cN_\cC(M,G)} H \ovl{N}=\ovl{H}$, $\bigcap_{N \in \cN_\cC(M,G)} K \ovl{N}=\ovl{K}$ and $\ovl{H \cap K} \subseteq (H \cap K) \ovl{M}$, \eqref{eq:big_intersec} demonstrates
that $(\ovl{H} \cap \ovl{K}) \setminus \ovl{H \cap K} \neq \emptyset$, contradicting (ii). Thus we have proved that (ii) implies (i).
\end{proof}

\section{Characterising tractableness of intersections using double cosets}
As before we will work with a fixed formation of finite groups $\cC$.
For a subgroup $H$ of a group $G$ the pro-$\cC$-topology on $G$ induces a topology on $H$ (which may, in general, be different from the pro-$\cC$ topology of $H$). We will use $\cO_\cC(H,G)$ to denote the open subgroups of $H$ in this induced topology. In other words,
\[\cO_\cC(H,G)=\{H \cap L \mid L \lo G\}.\]
Note that for every $H ' \in \cO_\cC(H,G)$, the index $|H:H'|$ is finite because any $L \lo G$ has finite index in $G$.

\begin{prop} \label{prop:main}
Let $G$ be a group with subgroups $H,K$. Then the following are equivalent:

\begin{itemize}
    \item[(i)] the double coset $HK$ is $\cC$-closed  and the intersection $H \cap K$ is pro-$\cC$ tractable in $G$;
    \item[(ii)] for every $H' \in \cO_\cC(H,G)$, with $H \cap K \subseteq H'$, the double coset $H' K$ is $\cC$-closed in $G$.
    \end{itemize}
\end{prop}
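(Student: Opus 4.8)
The plan is to establish the two implications directly, translating everything into statements about membership in products $SN$ with $N\lhd_o G$; recall that a subset $S\subseteq G$ is $\cC$-closed exactly when $S=\bigcap_{N\lhd_o G}SN$. Two preliminary observations will streamline both directions. First, $HN\cap KN=(H\cap KN)N$ for every $N\lhd_o G$ (an instance of the identity in the Remark preceding Proposition~\ref{prop:tract_intersec_equiv}), so for $N\subseteq M$ condition \eqref{eq:tract} is equivalent to the simpler inclusion $H\cap KN\subseteq(H\cap K)M$; hence $H\cap K$ is pro-$\cC$ tractable in $G$ if and only if for every $M\lhd_o G$ there is $N\lhd_o G$ with $N\subseteq M$ and $H\cap KN\subseteq(H\cap K)M$. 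Second, since inversion is a self-homeomorphism of $G$ in the pro-$\cC$ topology, $H'K$ is $\cC$-closed if and only if $KH'=(H'K)^{-1}$ is; and for any $L\lo G$ the subgroup $L$ is $\cC$-clopen and $|H:H\cap L|<\infty$.

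For the implication (i)$\Rightarrow$(ii), fix $H'=H\cap L\in\cO_\cC(H,G)$ with $H\cap K\subseteq H'$, and let $g\in\bigcap_{M\lhd_o G}H'KM$; the goal is $g\in H'K$. From $H'KM\subseteq HKM$ and the $\cC$-closedness of $HK$ we get $g\in HK$, so fix a decomposition $g=hk$ with $h\in H$, $k\in K$. For each $M\lhd_o G$ write $hk=h'k'm$ with $h'\in H'$, $k'\in K$, $m\in M$; then $h'^{-1}h=k'mk^{-1}=k'k^{-1}\cdot kmk^{-1}\in KM$ (using $M\lhd G$), so $h'^{-1}h\in H\cap KM$ and therefore $h\in H'(H\cap KM)$. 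Now invoke tractability: given $M\lhd_o G$ choose $N\subseteq M$ with $H\cap KN\subseteq(H\cap K)M$, and apply the previous line with $N$ in place of $M$ to obtain $h\in H'(H\cap KN)\subseteq H'(H\cap K)M=H'M$, the last equality using $H\cap K\subseteq H'$. As $M$ was arbitrary, $h\in\bigcap_{M\lhd_o G}H'M\subseteq\bigcap_{M\lhd_o G}LM=L$ because $L$ is $\cC$-closed; since also $h\in H$, we get $h\in H\cap L=H'$, hence $g=hk\in H'K$, as desired.

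For the implication (ii)$\Rightarrow$(i), applying (ii) with $H'=H=H\cap G\in\cO_\cC(H,G)$ shows $HK$ is $\cC$-closed. To prove $H\cap K$ is pro-$\cC$ tractable, fix $M\lhd_o G$ and set $L:=(H\cap K)M$, an open subgroup of $G$ containing $H\cap K$; then $H':=H\cap L\in\cO_\cC(H,G)$ and $H\cap K\subseteq H'$, so by (ii) and the second observation both $H'K$ and $KH'$ are $\cC$-closed. Since $|H:H'|<\infty$, pick representatives $h_2,\dots,h_d$ of the nontrivial cosets of $H'$ in $H$. Each $h_i$ lies outside $KH'$ (if $h_i=kh'$ then $k=h_i h'^{-1}\in H\cap K\subseteq H'$, forcing $h_i\in H'$), so by $\cC$-closedness of $KH'$ there is $P_i\lhd_o G$ with $h_i\notin KH'P_i$; put $N:=M\cap P_2\cap\dots\cap P_d\lhd_o G$, so $N\subseteq M$ and, using normality, $KNH'=KH'N\subseteq KH'P_i$, whence $h_i\notin KNH'$ for each $i$. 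A one-line check shows $h_iH'\cap KN\neq\emptyset$ if and only if $h_i\in KNH'$; therefore $h_iH'\cap KN=\emptyset$ for all $i$, so $(H\setminus H')\cap KN=\emptyset$, i.e.\ $H\cap KN\subseteq H'\subseteq(H\cap K)M$. By the first observation this yields \eqref{eq:tract} for the pair $M,N$, so $H\cap K$ is pro-$\cC$ tractable, completing the proof.

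The coset arithmetic throughout is routine; the step that requires the right idea is the choice in (ii)$\Rightarrow$(i) of the auxiliary subgroup $H'=H\cap(H\cap K)M\in\cO_\cC(H,G)$ — it must be small enough to sit inside $(H\cap K)M$, yet still of the admissible form $H\cap L$ with $L\lo G$ — together with the reduction of tractability to separating the finitely many cosets $h_iH'$ from $KN$ via the $\cC$-closedness of $KH'$. I expect this to be the main obstacle; once the correct $H'$ is identified, each implication unwinds into bookkeeping with products of the form $SN$.
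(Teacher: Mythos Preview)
Your proof is correct. The direction (ii)$\Rightarrow$(i) is essentially identical to the paper's argument, modulo the cosmetic choice of working with left cosets $h_iH'$ and the closedness of $KH'=(H'K)^{-1}$ rather than right cosets $H'h_i$ and $H'K$.

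For (i)$\Rightarrow$(ii) you take a genuinely different route. The paper applies tractability \emph{once}: taking $M$ to be the normal core of $L$, it finds a single $N\lhd_o G$ with $H\cap NK\subseteq H'$ and then verifies the identity $H'K=HK\cap H'KN$, exhibiting $H'K$ as the intersection of a $\cC$-closed set and a $\cC$-clopen set. Your argument instead works element-by-element with the closure characterisation $\ovl{H'K}=\bigcap_M H'KM$: you pull a hypothetical $g=hk$ back to $h\in H'M$ for every $M$ (applying tractability afresh for each $M$), and then use $\bigcap_M H'M\subseteq L$. The paper's version is more economical and yields the pleasant structural formula $H'K=HK\cap H'KN$; yours avoids that identity at the cost of invoking tractability infinitely often, but is perfectly valid and perhaps more transparent as a direct closure computation.
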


\begin{proof} Let us start with showing that (i) implies (ii). So, assume that (i) is true. Consider any $H' \in \cO_\cC(H,G)$, containing $H \cap K$. Then $H'=H \cap L$, for some $L \lo G$, with $H \cap K \subseteq L$. Let $M \lhd_o G$ denote the normal core of $L$ (it is $\cC$-open by \cite[Lemma~3.1.2]{Rib-Zal}). Since  $H\cap K$ is pro-$\cC$ tractable, there exists $N \lhd_o G$ such that $N \subseteq M$ and
\[HN \cap KN \subseteq (H \cap K)M \subseteq L.\]
Since $NK=KN$, as $N \lhd G$, we can conclude that
\begin{equation*}\label{eq:H_cap_KN_in_H'}
H \cap NK= H \cap KN \subseteq H\cap L=H'.
\end{equation*}
Therefore, we have
\[H' K \subseteq HK \cap H'KN=H'(HK \cap NK )=H'(H \cap NK)K \subseteq H' H' K=H'K,\]
whence $H'K=HK \cap H'KN$ in $G$. Note that the subset $H'KN$ is $\cC$-clopen in $G$, as $N \lhd_o G$,  and the double coset $HK$ is $\cC$-closed by the assumption (i). Thus $H'K$ is $\cC$-closed as the intersection of closed subsets, so (ii) holds.

Now let us assume (ii) and deduce (i). Then the double coset $HK$ is $\cC$-closed in $G$ because $H \in \cO_\cC(H,G)$ and $H \cap K \subseteq H$. Thus it remains to show that $H \cap K$ is pro-$\cC$ tractable in $G$.

Take any $M \lhd_o G$ and set $L=(H \cap K) M \lo G$. Then $H'=H \cap L \in \cO_\cC(H,G)$ and  we can write
$H= \bigsqcup_{i=1}^n H' h_i$, where $h_1=1$ and $h_i \in H \setminus H'$, for $i=2,\dots,n$. Note that $H \cap K \subseteq H'$, by construction, which easily implies that $h_i \notin H'K$, for $i=2,\dots,n$ (indeed, if $h_i=xy$, where $x \in H'$ and $y \in K$, then $x^{-1}h_i=y \in H \cap K \subseteq H'$, so $h_i \in H'$, whence and $i=1$). By the assumption (ii), the double coset $H'K$ is $\cC$-closed in $G$, hence there exists $N \lhd_o G$ such that
\begin{equation}\label{eq:h_i_notin_H_dash_K}
 h_i \notin H'K N, \text{ for }i=2,\dots,n.
\end{equation}
After replacing $N$ with $N \cap M$, we can suppose that $N \subseteq M$. Let us show that
\[HN \cap KN \subseteq L=(H \cap K)M.\]
Since $HN \cap KN=(H \cap KN)N$ and $N \subseteq L$, it is enough to check that $H \cap KN \subseteq L$. But, in view of \eqref{eq:h_i_notin_H_dash_K}, we know that $H'h_i \cap KN=\emptyset$, for $i=2,\dots,n$, hence $H \cap KN \subseteq H'h_1=H' \subseteq L$, as required. Therefore $H\cap K$ is pro-$\cC$ tractable in $G$  and (i) holds.
\end{proof}

\begin{cor} If $H, K$ are subgroups of a group $G$ then the following are equivalent:
\begin{itemize}
    \item[(i)] the double coset $HK$ is $\cC$-closed  and the intersection $H \cap K$ is pro-$\cC$ tractable in $G$.
    \item[(ii)] for every $H' \in \cO_\cC(H,G)$, with $H \cap K \subseteq H'$, the double coset $H' K$ is $\cC$-closed in $G$;
    \item[(iii)] for every $K' \in \cO_\cC(K,G)$, with $H \cap K \subseteq K'$, the double coset $HK'$ is $\cC$-closed in $G$;
    \item[(iv)] for all $H' \in \cO_\cC(H,G)$ and $K' \in \cO_\cC(K,G)$, with $H \cap K= H' \cap K'$, the double coset $H' K'$ is $\cC$-closed in $G$;
\end{itemize}
\end{cor}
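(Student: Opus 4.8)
The plan is to deduce everything from Proposition~\ref{prop:main}, which already provides the equivalence (i)$\,\Leftrightarrow\,$(ii), supplemented by an obvious symmetric version and two short direct implications. First I would record the symmetry. The map $g \mapsto g^{-1}$ is a homeomorphism of $G$ in its pro-$\cC$ topology, so a product of subsets $AB$ is $\cC$-closed if and only if $(AB)^{-1}=BA$ is; moreover the condition that $H \cap K$ be pro-$\cC$ tractable in $G$ is visibly symmetric in $H$ and $K$, and the family $\cO_\cC(K,G)$ depends only on $K$ and $G$. Applying Proposition~\ref{prop:main} to the pair $(K,H)$ in place of $(H,K)$ and translating the statement back through these remarks yields the equivalence (i)$\,\Leftrightarrow\,$(iii).

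Next I would prove (iv)$\,\Rightarrow\,$(ii). Fix $H' \in \cO_\cC(H,G)$ with $H \cap K \subseteq H'$. Since $H' \leqslant H$ we have $H' \cap K \subseteq H \cap K$, while $H \cap K \subseteq H'$ and $H \cap K \subseteq K$ give the reverse inclusion, so $H' \cap K = H \cap K$. Because $\cC$ is a non-empty formation it contains the trivial group, hence $G \lo G$ and therefore $K = K \cap G \in \cO_\cC(K,G)$. Taking $K' := K$ we get $H' \cap K' = H \cap K$, so (iv) applies and shows that $H'K' = H'K$ is $\cC$-closed; as $H'$ was arbitrary, this is exactly (ii).

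Finally I would prove (i)$\,\Rightarrow\,$(iv) by a two-step application of the proposition. Assume (i), and let $H' \in \cO_\cC(H,G)$ and $K' \in \cO_\cC(K,G)$ satisfy $H' \cap K' = H \cap K$; in particular $H \cap K \subseteq H'$, so as above $H' \cap K = H \cap K$. By the equivalence (i)$\,\Leftrightarrow\,$(ii), already in hand, $H'K$ is $\cC$-closed; combined with the fact that $H' \cap K = H \cap K$ is pro-$\cC$ tractable in $G$ (part of (i)), this says precisely that the pair $(H',K)$ satisfies condition (i) of Proposition~\ref{prop:main}. Invoking the symmetric equivalence (i)$\,\Leftrightarrow\,$(iii) for the pair $(H',K)$, we conclude that $H'K''$ is $\cC$-closed for every $K'' \in \cO_\cC(K,G)$ with $H' \cap K \subseteq K''$. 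Since $H' \cap K = H \cap K = H' \cap K' \subseteq K'$, the choice $K'' := K'$ gives that $H'K'$ is $\cC$-closed, which is (iv).

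These implications assemble into (iv)$\,\Rightarrow\,$(ii)$\,\Leftrightarrow\,$(i)$\,\Leftrightarrow\,$(iii) and (i)$\,\Rightarrow\,$(iv), so the four statements are equivalent. There is no genuine obstacle here; the only thing requiring care is the index bookkeeping — the repeated use of the elementary fact that $H \cap K \subseteq H' \leqslant H$ forces $H' \cap K = H \cap K$ — and making sure that the symmetric form of Proposition~\ref{prop:main} is applied to the correct pair of subgroups when passing from $(H,K)$ to $(H',K)$.
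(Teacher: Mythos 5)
Your proof is correct, but the implication towards (iv) is handled by a genuinely different route than the paper's. The paper derives (iv) from (ii) and (iii) by a single set-theoretic identity: $H'K \cap HK' = H'(K \cap HK') = H'(K\cap H)K' = H'K'$ (the last step using $K\cap H \subseteq H'$), so $H'K'$ is $\cC$-closed as an intersection of two $\cC$-closed sets. You instead apply Proposition~\ref{prop:main} a second time, to the derived pair $(H',K)$, and then invoke its symmetric form to reach $H'K'$. Both work, but the paper's identity is shorter and avoids re-entering the tractability machinery. Your route has one step you assert rather than justify: you say that ``$H'\cap K = H\cap K$ is pro-$\cC$ tractable in $G$ (part of (i))'', but tractability is defined as a property of the \emph{pair} of subgroups, not of the intersection subgroup alone, so (i) only gives tractability for the pair $(H,K)$. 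The transfer to the pair $(H',K)$ does hold, since $H'N\cap KN \subseteq HN\cap KN \subseteq (H\cap K)M = (H'\cap K)M$, but this one-line verification should be spelled out. With that inserted, your argument is complete; your treatment of (iv)$\Rightarrow$(ii) via $K'=K$ and of the symmetry giving (i)$\Leftrightarrow$(iii) matches the paper.
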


\begin{proof}
The equivalence between (i) and (ii) is the subject of Proposition~\ref{prop:main}, and the equivalence between (i) and (iii) follows by symmetry (or because $HK'=(K'H)^{-1}$). Evidently (iv) implies (ii). Conversely, (iv) follows from (ii) and (iii) because
\[H'K \cap HK'=H'(K \cap HK')=H'(K \cap H)K'=H'K',\]
where the last equality is valid since $K \cap H \subseteq H'$.
\end{proof}

%
%
%
%

\section{A LERF group without the Wilson-Zalesskii property}\label{sec:example}
Throughout this section we assume that $\cC$ is the family of all finite groups. In this case the pro-$\cC$ topology on a group $G$ is the {profinite topology},
$\cC$-open subgroups of $G$ are precisely the finite index subgroups and the $\cC$-closed subsets of $G$ are called {separable}. Recall that $G$ is said to be \emph{ERF} if all subgroups are separable and \emph{LERF} if all finitely generated subgroups are separable.

In this section we show that separability of a double coset $HK$ does not necessarily yield that the intersection $H \cap K$ is profinitely tractable even for finitely generated subgroups $H, K$ of a LERF group $G$. Our construction is based on examples of Grunewald and Segal from \cite{G-S}.

Let $A=M_2(\Z)$ be the additive group of $2 \times 2$ matrices with integer entries, and let $H=SL_2(\Z)$ act on $A$ by left multiplication. We define the group $G$ as the resulting semidirect product $A \rtimes H=M_2(\Z) \rtimes SL_2(\Z)$. Recall that $H$ is finitely generated and virtually free and $A$ is the free abelian group of rank $4$, hence $A$ is ERF and $H$ is LERF, so $G$ is LERF (see \cite[Theorem~4]{All-Greg}).

Denote by $i \in A$ the identity matrix from $M_2(\Z)$ and set $K=iHi^{-1} \leqslant G$.
For any subgroup $F \leqslant H=SL_2(\Z)$ the conjugacy class $i^F=\{fif^{-1} \mid f \in F\}\subseteq A$ is the orbit of the identity matrix under the left action of $F$, so it consists of matrices from $F$, but now considered as a subset of $M_2(\Z)=A$. Since the determinant map $\det: A=M_2(\Z) \to \Z$ is clearly continuous with respect to the profinite topologies on $A$ and $\Z$, the conjugacy class $i^H=\det^{-1}(\{1\})$ is closed in the profinite topology on $A$.

Now let us show that the product $i^H H$ is closed in the profinite topology on $G$. Indeed, suppose that $xy \in G \setminus i^H H$, where $x \in A$ and $y \in H$. Then $x \notin i^H$, so there is $m \in \N$ such that for the finite index characteristic subgroup $A'=M_2(m\Z) \lhd A$ we have $x \notin i^H A'$. The latter implies that $xy \notin i^H A' H$. Since $A'H$ is a finite index subgroup of $G$, we see that $i^H A'H$ is a clopen subset in the profinite topology on $G$ containing $i^HH$ but not containing $xy$. Thus $i^H H$ is indeed profinitely closed in $G$. Note that $i^HH=HiH$, thus the double coset $HK=(HiH)i^{-1}$ is separable in $G$.

As Grunewald and Segal observed in \cite[Section~5]{G-S}, $H$ contains a finite index free subgroup $H'$ (in fact, $|H:H'|=36$) such that the orbit of $i$ under the action of $H'$ is not separable in
the profinite topology on $A$ (equivalently, $H'$ is not closed in the congruence topology on $H=SL_2(\Z)$).

Observe that $H'iH=i^{H'} H$, so $H'iH \cap A=i^{H'}$. Since  $i^{H'}$ is not separable in $A$, it follows that the double cosets $H'iH$ and $H'K=(H'iH)i^{-1}$ cannot be separable in $G$ (this is true because the topology on the subgroup $A$, induced from the profinite topology on $G$, is always weaker than the profinite topology on $A$).

Finally, we note that $H \cap K=\{1\}$ because the $H$-stabiliser of $i$ is trivial, and every finite index subgroup of $H$ belongs to $\cO_{\cC}(H,G)$, as $G$ is LERF and $H$ is finitely generated (see, for example, \cite[Lemma 4.17]{M-M}).
Thus we have constructed the following example.

\begin{ex}
There is a LERF group $G$ (isomorphic to a split extension of $\Z^4$ by $SL_2(\Z)$) and finitely generated subgroups $H,K \leqslant G$ such that $H \cap K=\{1\}$ and the double coset $HK$ is separable in $G$, but the double coset $H'K$ is not separable in $G$, for some finite index subgroup $H' \leqslant_f H$. We deduce, from Proposition~\ref{prop:main}, that the intersection $H \cap K$ is not profinitely tractable in $G$, so $G$ does not have the Wilson-Zalesskii property by Proposition~\ref{prop:tract_intersec_equiv}.
\end{ex}

\end{document}